\documentclass[12pt]{amsart}

\usepackage{amsmath,amssymb,latexsym} 
\usepackage{fullpage}
\usepackage{etoolbox}
\usepackage{enumitem}
\usepackage{color}
\usepackage[pagebackref=true,colorlinks,linkcolor=blue,citecolor=magenta]{hyperref}
\usepackage[colorinlistoftodos,bordercolor=orange,backgroundcolor=orange!20,linecolor=orange,textsize=scriptsize]{todonotes}
\usepackage{frenchineq}
\usepackage{soul}

\newtheorem{theorem}{Theorem} 
\newtheorem{corollary}[theorem]{Corollary}
\newtheorem{proposition}[theorem]{Proposition}

\newtheorem{lemma}[theorem]{Lemma}

\theoremstyle{definition}

\def\Ker{\operatorname{Ker}}
\def\HH{\mathcal{H}}

\def\E{\mathcal{E}}
\def\U{\mathcal{U}}
\def\V{\mathcal{V}}
\def\W{\mathcal{W}}

\def\R{\mathbb{R}}

\def\Z{\mathbb{Z}}
\def\F{\mathbb{F}}
\def\FF{\mathcal{F}}

\def\S{\mathcal{S}}
\def\C{\mathbb{C}}

\def\xx{\boldsymbol{x}}
\def\yy{\boldsymbol{y}}
\def\zz{\boldsymbol{z}}

\def\ds{\displaystyle}

\def\KG{\operatorname{KG}}
\def\cd{\operatorname{cd}}
\def\rk{\operatorname{rk}}

\def\V{\mathcal{V}}
\def\E{\mathcal{E}}

\def\ind{\operatorname{ind}}
\def\coind{\operatorname{coind}}
\def\Xind{\operatorname{Xind}}
\def\Hom{\operatorname{Hom}}
\def\NP{\operatorname{NP}}
\def\coNP{\operatorname{coNP}}

\def\span{\operatorname{span}}

\def\minrk{\operatorname{minrk}}

\def\od{\operatorname{\overline{\xi}}}

\begin{document} 

\title{Topological bounds for graph representations \\ over any field}

\author{Meysam Alishahi}
\address{M. Alishahi,
Faculty of Mathematical Sciences,
Shahrood University of Technology, Shahrood, Iran}
\email{meysam\_alishahi@shahroodut.ac.ir}

\author{Fr\'ed\'eric Meunier}
\address{F. Meunier, Universit\'e Paris Est, CERMICS, 77455 Marne-la-Vall\'ee CEDEX, France}
\email{frederic.meunier@enpc.fr}

\begin{abstract}
Haviv ({\em European Journal of Combinatorics}, 2019) has recently proved that some topological lower bounds on the chromatic number of graphs are also lower bounds on their orthogonality dimension over $\R$. We show that this holds actually for all known topological lower bounds and all fields. We also improve the topological bound he obtained for the minrank parameter over $\R$ --  an important graph invariant from coding theory -- and show that this bound is actually valid for all fields as well. The notion of independent representation over a matroid is introduced and used in a general theorem having these results as corollaries. Related complexity results are also discussed.
\end{abstract}

\keywords{Cross-index; graph; Hom-complex; minrank; orthogonal representation; matroid; topological lower bound}

\maketitle 

\section{Introduction}\label{sec:intro}


Given a field $\F$, an assignment of a vector $\xx_v\in\F^t$ to each vertex $v$ of a graph $G$ is a $t$-dimensional {\em orthogonal representation} of $G$ over $\F$ if $\langle\xx_u,\xx_u\rangle\neq 0$
for every vertex $u$ and $\langle\xx_u,\xx_v\rangle=0$ for every edge $uv$. Here $\langle\yy,\zz\rangle$ stands for $\sum_iy_iz_i$ except when the field is $\C$ for which we can also understand $\langle\yy,\zz\rangle$ as $\sum_iy_i\overline{z_i}$; none of our results depends on which definition is chosen for complex vector spaces.

The {\em orthogonality dimension} of a graph $G$ over $\F$, denoted by $\od_{\F}(G)$, is the smallest $t$ for which there exists a $t$-dimensional orthogonal representation of $G$ over $\F$. This definition is sometimes given for the complement of $G$. We follow the setting proposed by Haviv~\cite{haviv2019approximating}, 
whose work~\cite{haviv2019topological} was the starting point of this paper.

The orthogonality dimension of a graph over $\R$ was originally introduced by Lov\'asz~\cite{lovasz1979shannon} for his work on the Shannon capacity of a graph. 
Peeters~\cite{Peeters1996} extended the notion over any field, and it is now a well-studied notion. The orthogonality dimension of a graph $G$ is always at least its clique number and at most its chromatic number, i.e., $\chi(G)\geq\od_{\F}(G)\geq\omega(G)$.

The clique number is one of the most natural lower bounds on the chromatic number. After the celebrated work of Lov\'asz on the chromatic number of Kneser graphs~\cite{lovasz1978kneser}, topology has also been a way to get lower bounds on this parameter. Recently, Haviv~\cite{haviv2019topological} proved that, similarly, two of these topological bounds, the ``2-colorability defect of a Kneser representation of $G$'' denoted by $\cd_2(\FF_G)$ and the so-called ``B\'ar\'any's bound'', are lower bounds on the orthogonality dimension over $\R$.

Another geometric parameter associated to a graph is the minrank. A $V\times V$ matrix $A$ {\em represents} a graph $G=(V,E)$ over a field $\F$ if $A_{u,u}\neq 0$ for every vertex $u$ and if $A_{u,v}=0$ for every distinct nonadjacent vertices $u$ and $v$. The {\em minrank} of $G$ over $\F$ is defined as $\minrk_{\F}(G)=\min\{\rk_{\F}(A)\colon A\mbox{ represents }G\mbox{ over }\F\}$. Haviv also proved the inequality $\minrk_{\R}(\overline G)\geq\sqrt{\frac{\cd_2(\mathcal{F}_G)} 2}$ (where $\overline G$ is the complement of $G$). 

We improve both results of Haviv. First, we prove that whatever the field, {\em all} known topological lower bounds on the chromatic number are also lower bounds on the orthogonality dimension. Second, we prove that the square root is not necessary in the lower bound of the minrank, and that this lower bound is again valid for any field $\F$. These improvements are consequences of our main result, which we state in the more general setting of matroids to emphasize the role played by independence. 

Let $G$ be a graph and $M$ a matroid. Consider an assignment of an element $x_v\in M$ to each vertex $v$ of $G$ such that $x_v$ is not in the span of the elements assigned to the neighbors of $v$, i.e., $x_v\notin\span(\{x_u\colon u\in N(v)\})$ for every vertex $v$. In particular, $x_v$ is never a loop. We call such an assignment an {\em independent representation} of $G$ over $M$. The special case when $M$ is the linear matroid built from a vector space $\U$ -- which we call then also an {\em independent representation} of $G$ over $\U$ -- was already studied in the context of linear index codings; see the beginning of Section C of the paper by Shanmugam, Dimakis, and Langberg~\cite{shanmugam2013local}. This special case is also close to the notion of independence-preserving representation introduced by Lov\'asz and Vesztergombi~\cite[Chapter 10]{lovasz1999geometric}.

Our main result is the following theorem which is a counterpart for independent representations of the ``zig-zag'' theorem by Simonyi and Tardos~\cite{simonyi2006local} and of its generalizations~\cite{alishahi2017strengthening,simonyi2013colourful}. The quantity $\Xind(\Hom(K_2,G))$, which appears in its statement, is the ``cross-index of the Hom-complex''  that provides a lower bound on the chromatic number that dominates all known topological lower bounds on it~\cite[Section 3]{simonyi2013colourful}.
Its definition is rather technical and is postponed to a section where we discuss these topological bounds. We simply emphasize that $\Xind(\Hom(K_2,G))+2$ is nonsmaller than $\cd_2(\FF_G)$ and nonsmaller than ``B\'ar\'any's bound''.

\begin{theorem}\label{thm:matr}
Let $G$ be a graph with at least one edge and let $t=\Xind(\Hom(K_2,G))+2$. For every independent representation of $G$ over a matroid $M$, there is a complete bipartite subgraph $K_{\lfloor t/2\rfloor,\lceil t/2\rceil}$ such that the elements assigned to every side are independent, and thus in particular the following inequality holds: $\rk M\geq\frac 1 2 \Xind(\Hom(K_2,G))+2$. 
\end{theorem}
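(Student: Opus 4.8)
The plan is to derive the complete bipartite subgraph from the defining property of the cross-index, and then to read the rank inequality off it almost for free. For the latter step, suppose we have already found a copy of $K_{\lfloor t/2\rfloor,\lceil t/2\rceil}$ in $G$ with parts $P$ ($|P|=\lfloor t/2\rfloor$) and $Q$ ($|Q|=\lceil t/2\rceil$) whose assigned elements are independent in $M$. Pick any $w\in P$; since the subgraph is complete bipartite, $Q\subseteq N(w)$, so $x_w\notin\span(\{x_v\colon v\in Q\})$ by the very definition of an independent representation. Hence $\{x_w\}\cup\{x_v\colon v\in Q\}$ is independent of size $\lceil t/2\rceil+1$, so $\rk M\ge\lceil t/2\rceil+1\ge\frac12\Xind(\Hom(K_2,G))+2$, which is the ``thus in particular'' part.

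It therefore remains to produce the subgraph, equivalently (after replacing the two sides by independent subsets of the prescribed sizes) to find a cell $(A,B)$ of $\Hom(K_2,G)$ --- that is, a complete bipartite subgraph of $G$ with parts $A$ and $B$ --- with $\rk_M\{x_v\colon v\in A\}\ge\lfloor t/2\rfloor$ and $\rk_M\{x_v\colon v\in B\}\ge\lceil t/2\rceil$ (the condition being symmetric in $A$ and $B$). I would argue by contraposition: assuming no such cell exists, I would build a $\Z_2$-poset map from the face poset of $\Hom(K_2,G)$ to the face poset $Q_{t-3}$ of the boundary of the $(t-2)$-dimensional cross-polytope (pairs $(S,\sigma)$ with $\emptyset\neq S\subseteq\{1,\dots,t-2\}$ and $\sigma\colon S\to\{+,-\}$, ordered by restriction, with $\Z_2$ flipping all signs). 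By monotonicity of $\Xind$ under $\Z_2$-poset maps this witnesses $\Xind(\Hom(K_2,G))\le t-3$, contradicting $t=\Xind(\Hom(K_2,G))+2$.

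For the map itself I would mimic the ``zig-zag'' map of Simonyi and Tardos and its refinements in \cite{simonyi2013colourful,alishahi2017strengthening}, replacing ``two vertices having the same colour'' by ``a vertex whose assigned element lies in the span of the elements of the previously selected vertices on its own side''. After fixing a linear order on $V(G)$, a cell $(A,B)$ is processed by scanning $A\sqcup B$ in that order and recording the vertices whose element is not spanned by the earlier recorded vertices on the same side; this produces nested independent subsets $I_A\subseteq A$, $I_B\subseteq B$ with $|I_A|=\rk A$, $|I_B|=\rk B$, together with the pattern in which the two greedy subsequences alternate. From that pattern one designates a ``leading'' side (the one owning the first recorded vertex, ties broken down the order), allots it coordinates $1,2,\dots$ up to $\lceil(t-2)/2\rceil$ and the other side the rest up to $\lfloor(t-2)/2\rfloor$, gives the sign $+$ to one side and $-$ to the other, and outputs the corresponding signed subset of $\{1,\dots,t-2\}$. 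Monotonicity of matroid rank (enlarging $A$ only appends vertices to $I_A$) should make this order-preserving, swapping $A$ and $B$ flips all signs and hence yields equivariance, and the ``no cell'' hypothesis should guarantee that the leading side never needs more than $\lceil(t-2)/2\rceil$ coordinates and the other never more than $\lfloor(t-2)/2\rfloor$, so every output is a genuine face of $Q_{t-3}$ with at most $t-2$ coordinates used. That $x_v$ is never a loop keeps $\rk A,\rk B\ge 1$ on nonempty sides, so the map is everywhere defined.

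The hard part will be exactly this bookkeeping: arranging the choice of leading side and the capping so that the map is simultaneously $\Z_2$-equivariant and order-preserving along every chain of cells. The delicate case is that of the ``balanced'' cells with $\rk A=\rk B$, which sit right at the threshold $\lfloor t/2\rfloor$ when $t$ is odd; there the ranks alone do not determine the image, and one must check that the order-based tie-break selecting the leading side is stable under passing to sub-cells. Using the combinatorial definition of $\Xind$ through $\Z_2$-poset maps (rather than through a sphere and Ky Fan's theorem) is what lets the argument avoid any hypothesis on the topology of $\Hom(K_2,G)$; the only external ingredient is that $\Xind$ of the face poset of the boundary of the $(t-2)$-dimensional cross-polytope equals $t-3$, mirroring the coloring bound $\chi(G)\ge\Xind(\Hom(K_2,G))+2$, to which the construction specializes when $M$ is free and the $x_v$ are distinct basis elements named by colors.
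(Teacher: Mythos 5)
Your reduction of the rank inequality to the existence of the balanced bipartite subgraph is exactly the paper's, and your high-level plan --- turn the failure of the conclusion into an order-preserving $\Z_2$-map out of $\Hom(K_2,G)$ witnessing a too-small cross-index --- is in the right spirit. But the construction you sketch does not work as stated, for two concrete reasons. First, the map is not well-defined: the negation of the conclusion only says that no cell $(A,B)$ has \emph{both} $\rk\S(A)\ge\lfloor t/2\rfloor$ and $\rk\S(B)\ge\lceil t/2\rceil$; it does not prevent one side from having arbitrarily large rank (e.g.\ $\rk\S(A)=1$ and $\rk\S(B)=100$ is perfectly compatible with it). So the ``leading'' side can need far more than $\lceil (t-2)/2\rceil$ coordinates and your signed subset falls outside $\{1,\dots,t-2\}$. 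Second, the order-preservation argument rests on two false claims: enlarging $A$ does \emph{not} merely append vertices to the greedy independent set $I_A$ (a new vertex earlier in the scan order can evict a previously recorded one --- take two parallel elements), and the ``leading side'' (owner of the first recorded vertex) is not stable under passing from $(A,B)$ to a larger cell $(A',B')$, since $B'\setminus B$ may contain a vertex preceding everything in $A$. You flag this second point as ``the hard part'' but do not resolve it, and resolving it is where the actual content of the theorem lies.

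The paper's proof sidesteps all of this bookkeeping. It fixes a total order $\preceq$ on the spans $\S(U)$ refining the rank order, labels each cell by the single signed integer $\phi(X,Y)=\pm(\rk\S(X)+\rk\S(Y))$, with the sign recording which of $\S(X),\S(Y)$ is $\preceq$-larger (well-defined because $\S(X)\ne\S(Y)$ for a cell of $\Hom(K_2,G)$ in an independent representation), and checks that $\phi$ satisfies the hypotheses of the Fan lemma for the cross-index (Lemma~\ref{lem:fan-xind}). That lemma outputs a chain of $t-1$ cells along which $|\phi|$ strictly increases and the sign alternates; the alternation, combined with the monotonicity of the nested spans along the chain, is precisely what forces the two ranks at the top of the chain to be at least $\lfloor t/2\rfloor$ and $\lceil t/2\rceil$ rather than merely to have a large sum. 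If you want to avoid citing the Fan lemma you would essentially have to reprove it, and its proof is where the tie-breaking you are worried about is handled correctly (via a global order on the labels, not via a first-vertex rule).
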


The bound on $\rk M$ is a direct consequence of the first part of the sentence: the elements assigned to the side with $\lceil t/2\rceil$ vertices are independent and any element on the other side is not in their span.
%

Our improvements over Haviv's results are formulated as the following corollaries.

\begin{corollary}\label{cor:ortho}
For every graph $G$ with at least one vertex and every field $\F$, the following inequality holds: $\ds{\od_{\F}(G)\geq\Xind(\Hom(K_2,G))+2}$.
\end{corollary}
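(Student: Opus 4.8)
The plan is to derive Corollary~\ref{cor:ortho} from Theorem~\ref{thm:matr} by feeding the latter the linear matroid of $\F^t$, and then to exploit one extra feature that orthogonal representations have but abstract independent representations over a matroid do not: an ambient nondegenerate symmetric (or Hermitian) bilinear form on $\F^t$. We may assume $G$ has at least one edge, since otherwise $\od_\F(G)=1$ (assign $1\in\F^1$ to every vertex; a $0$-dimensional representation is impossible because it would force $\langle\xx_u,\xx_u\rangle=0$), the complex $\Hom(K_2,G)$ is empty, and the inequality then holds trivially.

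Set $t=\od_\F(G)$ and fix a $t$-dimensional orthogonal representation $(\xx_v)_{v\in V(G)}$ of $G$ over $\F$. The first step is to observe that this assignment is an independent representation of $G$ over the vector space $\F^t$, i.e.\ over its linear matroid. Indeed, $\xx_v\ne\zero$ because $\langle\xx_v,\xx_v\rangle\ne 0$; and if $\xx_v=\sum_{u\in N(v)}c_u\xx_u$ for some scalars $c_u$, then applying $\langle\,\cdot\,,\xx_v\rangle$ to both sides (which is linear in the first argument under either convention) yields $\langle\xx_v,\xx_v\rangle=\sum_{u\in N(v)}c_u\langle\xx_u,\xx_v\rangle=0$, since $uv$ is an edge for every $u\in N(v)$, contradicting $\langle\xx_v,\xx_v\rangle\ne 0$. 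Hence $\xx_v\notin\span\{\xx_u\colon u\in N(v)\}$ for every vertex $v$, and the hypothesis of Theorem~\ref{thm:matr} is met.

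Now apply Theorem~\ref{thm:matr} with $M=\F^t$, writing $s=\Xind(\Hom(K_2,G))+2$: we obtain a complete bipartite subgraph of $G$ with sides $A$ and $B$, $|A|=\lfloor s/2\rfloor$ and $|B|=\lceil s/2\rceil$, such that $\{\xx_v\colon v\in A\}$ and $\{\xx_v\colon v\in B\}$ are each linearly independent in $\F^t$; moreover $\langle\xx_u,\xx_v\rangle=0$ for all $u\in A$ and $v\in B$ because the subgraph is complete bipartite. Put $W=\span\{\xx_u\colon u\in A\}$, so $\dim W=\lfloor s/2\rfloor$. Since the form $\langle\yy,\zz\rangle=\sum_i y_iz_i$ (or the Hermitian form when $\F=\C$) is nondegenerate on $\F^t$, we have $\dim W+\dim W^{\perp}=t$ over \emph{every} field — this identity persists even when the restriction of the form to $W$ is degenerate, as can happen in characteristic $2$. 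The vectors $\{\xx_v\colon v\in B\}$ lie in $W^{\perp}$ and are independent, so $\lceil s/2\rceil\le\dim W^{\perp}=t-\lfloor s/2\rfloor$, which rearranges to $t\ge\lfloor s/2\rfloor+\lceil s/2\rceil=s$, that is, $\od_\F(G)\ge\Xind(\Hom(K_2,G))+2$.

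The one genuinely delicate point is the dimension identity $\dim W+\dim W^{\perp}=t$ over an arbitrary field: note that the seemingly cleaner claim ``$\{\xx_v\colon v\in A\cup B\}$ is independent'' is \emph{false} in characteristic $2$ (one can have $W\subseteq W^{\perp}$), so one must argue through $B\subseteq W^{\perp}$ together with nondegeneracy of the ambient form rather than through a direct-sum decomposition. It is precisely this use of the ambient form that doubles the rank bound $\rk M\ge\tfrac12\Xind(\Hom(K_2,G))+2$ supplied by Theorem~\ref{thm:matr} into the full $\Xind(\Hom(K_2,G))+2$ here, and that makes the orthogonality dimension behave strictly better than a general independent representation over a matroid. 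Everything else — the edgeless case and the passage through the linear matroid — is routine.
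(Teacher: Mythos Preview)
Your proof is correct and follows essentially the same approach as the paper's: view the orthogonal representation as an independent representation over $\F^t$, apply Theorem~\ref{thm:matr} to obtain the balanced complete bipartite subgraph, and then use nondegeneracy of the ambient form ($\dim W+\dim W^{\perp}=t$) together with $\span(B)\subseteq W^{\perp}$ to conclude. Apart from swapping the roles of the letters $s$ and $t$ and supplying more detail (the edgeless case, why an orthogonal representation is an independent representation, and the caution that $A\cup B$ need not index an independent family in characteristic~$2$), the argument is identical to the paper's.
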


\begin{proof}
Consider an $s$-dimensional orthogonal representation over $\F$. It is an independent representation over $\F^s$. Let $t=\Xind(\Hom(K_2,G))+2$.

If $G$ has no edge, then $t=1$ by definition (see Section~\ref{subsubsec:cross}) and the inequality to be proved is obvious. 

So suppose that $G$ has at least one edge. According to Theorem~\ref{thm:matr}, there is a complete bipartite subgraph $K_{\lfloor t/2\rfloor,\lceil t/2\rceil}$ such that the vectors assigned to every side are linearly independent. Denote by $\U$ and $\W$ the vector spaces spanned by the vectors assigned respectively to one side of the complete bipartite graph and to the other. Since we are given an $s$-dimensional representation, we have $s=\dim\U+\dim\U^{\bot}$. (This property is true as soon as we work with a nondegenerate bilinear form on a finite-dimensional vector space.) This representation being orthogonal, we have $\W\subseteq\U^{\bot}$. Combined with the previous equality, it leads to $s\geq\dim\U+\dim\W=t$.
\end{proof}

Corollary~\ref{cor:ortho} shows in particular that if the chromatic number of a graph matches one of the topological lower bounds known in the literature, then the orthogonal dimension of this graph does not depend on the field and is equal to the chromatic number. There are many graphs like this, such as Kneser graphs, Schrijver graphs, some Mycielski graphs, Borsuk graphs;  see~\cite{matousek2002topological} for the definition of these graphs and other examples. 

\begin{corollary}\label{cor:minrk}
For every graph $G$ with at least one edge and every field $\F$, the following inequality holds: $\ds{\minrk_{\F}(\overline G)\geq\frac 1 2 \Xind(\Hom(K_2,G))+2}$.
\end{corollary}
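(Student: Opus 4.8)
The goal is to deduce Corollary~\ref{cor:minrk} from Theorem~\ref{thm:matr}, exactly as Corollary~\ref{cor:ortho} was deduced. The first move is to translate a minrank representation of $\overline G$ into an independent representation of $G$ over a suitable vector space. Recall that a matrix $A$ represents $\overline G$ over $\F$ when $A_{u,u}\neq 0$ for every vertex $u$ and $A_{u,v}=0$ for every pair $u,v$ that is nonadjacent in $\overline G$, i.e. for every edge $uv$ of $G$. Write $r=\rk_\F(A)=\minrk_\F(\overline G)$ and factor $A=B^\top C$ with $B,C\in\F^{r\times V}$; denote by $\bb_v$ and $\cc_v$ the columns of $B$ and $C$ indexed by $v$. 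Then $\langle\bb_u,\cc_v\rangle=A_{u,v}$, so $\langle\bb_v,\cc_v\rangle\neq 0$ for every $v$ while $\langle\bb_u,\cc_v\rangle=0$ whenever $uv\in E(G)$.

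The key observation is that the assignment $v\mapsto\cc_v\in\F^r$ is an independent representation of $G$ over $\F^r$. Indeed, suppose $\cc_v\in\span\{\cc_u\colon u\in N(v)\}$, say $\cc_v=\sum_{u\in N(v)}\lambda_u\cc_u$. Pairing with $\bb_v$ gives $\langle\bb_v,\cc_v\rangle=\sum_{u\in N(v)}\lambda_u\langle\bb_v,\cc_u\rangle=0$, since each $u\in N(v)$ is a neighbor of $v$ and hence $\langle\bb_v,\cc_u\rangle=0$; this contradicts $\langle\bb_v,\cc_v\rangle\neq 0$. (Here I am using $\langle\bb_v,\cc_u\rangle=A_{v,u}=0$ for the edge $vu$, which holds for the symmetric adjacency relation defining $G$.) Thus $\cc_v\notin\span\{\cc_u\colon u\in N(v)\}$ for every $v$, as required.

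Now apply Theorem~\ref{thm:matr} with $M$ the linear matroid of $\F^r$ and $t=\Xind(\Hom(K_2,G))+2$: since $G$ has at least one edge, the bound $\rk M\geq\frac12\Xind(\Hom(K_2,G))+2$ yields $r=\rk(\F^r)\geq\frac12\Xind(\Hom(K_2,G))+2$, which is precisely $\minrk_\F(\overline G)\geq\frac12\Xind(\Hom(K_2,G))+2$. One could instead invoke the first (stronger) part of Theorem~\ref{thm:matr} directly: the $\lceil t/2\rceil$ vectors $\cc_v$ on the larger side are linearly independent, and no $\cc_w$ from the smaller side lies in their span, so these $\lceil t/2\rceil+1\geq\frac12 t+1=\frac12\Xind(\Hom(K_2,G))+2$ vectors are linearly independent in $\F^r$, forcing $r\geq\frac12\Xind(\Hom(K_2,G))+2$.

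**Main obstacle.** There is essentially no hard step: the only thing to be careful about is the bookkeeping in passing from $\overline G$ to $G$ (so that the vanishing pattern of $A$ corresponds to edges of $G$, making the $\cc_v$ an independent representation of $G$ rather than of $\overline G$) and the choice of which factor, $B$ or $C$, to feed into Theorem~\ref{thm:matr}. Everything else is an immediate application of Theorem~\ref{thm:matr}, so the corollary is genuinely a corollary.
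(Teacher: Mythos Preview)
Your proof is correct and follows essentially the same approach as the paper: convert a minrank-achieving matrix for $\overline G$ into an independent representation of $G$ over $\F^r$, then invoke Theorem~\ref{thm:matr}. The only difference is packaging: the paper isolates the passage from minrank to independent representations as a separate Lemma~\ref{lem:ind-minrk} (proving both directions of the equivalence $\minrk_\F(\overline G)=\min\{s:\text{$G$ has an independent representation over }\F^s\}$), whereas you prove inline only the direction needed here via the rank factorization $A=B^\top C$; your argument is in fact slightly slicker than the paper's version of that direction.
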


\begin{proof}
Consider an independent representation over $\F^s$. According to Theorem~\ref{thm:matr}, there is a complete bipartite subgraph $K_{\lfloor t/2\rfloor,\lceil t/2\rceil}$ such that the vectors assigned to every side are linearly independent. The vectors assigned to the side with $\lceil t/2\rceil$ vertices are independent, and any vector on the other side is not in their span. Thus, $s\geq t/2+1$.

The conclusion comes from the following fact: the minimum $s$ such that there exists an independent representation of a graph over $\F^s$ is the minrank of its complement over $\F$; see Lemma~\ref{lem:ind-minrk} in Section~\ref{subsec:rep-mat}.
\end{proof}

\section{Tools}

\subsection{Topological lower bounds on the chromatic number} 

\subsubsection{The $2$-colorability defect lower bound}\label{subsubsec:cd} Given a hypergraph $\HH=(\V,\E)$, the graph $\KG(\HH)$ has $\E$ as vertex set, and two vertices of $\KG(\HH)$ are adjacent if the corresponding edges in $\HH$ are disjoint. The hypergraph $\HH$ is a {\em Kneser representation} of $G$ if $G$ and $\KG(\HH)$ are isomorphic. All simple graphs have a Kneser representation. The {\em $2$-colorability defect} of $\HH$ is the minimum vertices to remove from $\HH$ so that it becomes $2$-colorable. Dol'nikov~\cite{dol1988certain} proved that we always have $\chi(G)\geq\cd_2(\HH)$ when $\HH$ is a Kneser representation of $G$. This is one of the two topological lower bounds used in the paper by Haviv.

\subsubsection{B\'ar\'any's bound} B\'ar\'any~\cite{barany1978short} proposed an alternate way to compute the chromatic number of Kneser graphs. As noted by Matou\v{s}ek and Ziegler~\cite{matousek2002topological}, his argument relies on the following claim: {\em Let $\HH=(\V,\E)$ be a Kneser representation of a graph $G$. If $\V$ can be placed into the $(t-2)$-dimensional sphere in such a way that for every open hemisphere $H$ there exists an edge $e\in\E$ with $e\subseteq H$, then $\chi(G)\geq t$.} (Kneser representations have been defined in Section~\ref{subsubsec:cd}.) This is the other topological lower bound used  by Haviv.

\subsubsection{The cross-index lower bound}\label{subsubsec:cross} A {\em free $Z_2$-poset} is a poset with a fixed-point free involution $\nu$ that preserves the order. A map $\phi\colon P\rightarrow Q$ between two free $Z_2$-posets
$P$ and $Q$ is an {\em order-preserving $Z_2$-map} if it is order-preserving and if $\phi(\nu\cdot p)=\nu\cdot\phi(p)$ for all $p\in P$. 
For a nonnegative integer $n$, we define $Q_n$ to be the free $Z_2$-poset with elements $\{\pm 1,\ldots,\pm (n+1)\}$ and whose partial order $<_{Q_n}$ is given by $p<_{Q_n}q$ if $|p|<|q|$. Note that $-i$ and $+i$ are not comparable in $Q_n$. For a free $Z_2$-poset $P$, the {\em cross-index} of $P$, denoted by $\Xind(P)$, is the minimum $n$ such that there is an order-preserving $Z_2$-map from $P$ to $Q_n$. Note that if $P$ is nonempty, then $\Xind(P)\geq 0$. By convention, the value of $\Xind(P)$ is $-1$ when $P$ is empty.

The {\em Hom complex} of a graph $G=(V,E)$, denoted by $\Hom(K_2, G)$, is a free $Z_2$-poset consisting of all pairs $(X,Y)$ such that $X$ and $Y$ are nonempty disjoint subsets of $V$ and $G[X,Y]$ is a complete bipartite graph. The partial order of this poset is the inclusion $\subseteq$ extended to pairs of subsets of $V$: the relation $(X,Y)\subseteq (X',Y')$ holds when $X\subseteq X'$ and $Y\subseteq Y'$. The involution $\nu$ on $\Hom(K_2, G)$ is defined by $\nu\cdot(X,Y)=(Y,X)$.

Simonyi, Tardif, and Zsb\'an~\cite{simonyi2013colourful} proved that we always have $\chi(G)\geq\Xind(\Hom(K_2,G))+2$.

\subsubsection{Comparison} Simonyi, Tardif, and Zsb\'an also explain why this provides a better lower bound than the other topological lower bounds like $\ind(B_0(G))$ or $\coind(B_0(G))$; see for instance~\cite{matousek2002topological,simonyi2013colourful} for the definitions. Matou\v{s}ek and Ziegler~\cite{matousek2002topological} have provided a hierarchy on the various topological lower bounds on the chromatic number. This shows that the cross-index lower bound of Simonyi, Tardif, and Zsb\'an is the best topological lower bound. In particular, we have $\Xind(\Hom(K_2,G))+2\geq\cd_2(\HH)$ when $\HH$ is a Kneser representation of $G$, and the same inequality with B\'ar\'any's bound.

\subsection{A Fan lemma for the cross-index}

There is a ``Fan lemma'' for the cross-index, proved by the present two authors and Hossein Hajiabolhassan~\cite[Lemma 4]{alishahi2017strengthening}.

\begin{lemma}\label{lem:fan-xind}
Let $(P,\preceq)$ be a free $Z_2$-poset and $r=\Xind(P)+1$. Consider a map $\phi\colon P\rightarrow \Z\setminus\{0\}$ such that
\begin{itemize}
\item $\phi(p)=-\phi(q)$ implies that $p$ and $q$ are not comparable.
\item $|\phi(p)|\leq|\phi(q)|$ when $p\preceq q$.
\item $\phi(\nu\cdot p)=-\phi(p)$ for every $p$.
\end{itemize}
Then there exists a chain $p_1\prec p_2\prec\cdots\prec p_r$ such that $$0<-\phi(p_1)<+\phi(p_2)<-\phi(p_3)<\cdots<(-1)^{r}\phi(p_r).$$
\end{lemma}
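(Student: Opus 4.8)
The plan is to argue by contradiction and to reduce the statement to the very definition of the cross-index, by constructing an explicit order-preserving $Z_2$-map into a small $Q_n$. First I would record an elementary but crucial consequence of the hypotheses: along any chain on which the sign of $\phi$ alternates, the absolute values $|\phi(\cdot)|$ strictly increase. Indeed, if $p\prec q$ with $\phi(p)$ and $\phi(q)$ of opposite signs, then $|\phi(p)|\le|\phi(q)|$ by the second bullet, and equality is impossible because it would force $\phi(p)=-\phi(q)$ on a comparable pair, contradicting the first bullet. This shows that the displayed conclusion is equivalent to producing a chain $p_1\prec\cdots\prec p_r$ whose signs read $-,+,-,\ldots$; the strict inequalities between the $(-1)^i\phi(p_i)$ then come for free.

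Second, for each $p\in P$ I would let $a(p)$ denote the maximum length of a chain ending at $p$ along which the sign of $\phi$ alternates (so $a(p)\ge 1$ always, $P$ being finite). The heart of the argument is to show $\max_p a(p)\ge r$. Suppose instead that $a(p)\le r-1$ for every $p$, and define $\lambda\colon P\to\{\pm 1,\ldots,\pm(r-1)\}$ by $\lambda(p)=\operatorname{sign}(\phi(p))\cdot a(p)$. Since $1\le a(p)\le r-1$, this lands in $Q_{r-2}$. The third bullet gives $\operatorname{sign}(\phi(\nu\cdot p))=-\operatorname{sign}(\phi(p))$, while $a(\nu\cdot p)=a(p)$ because $\nu$ is an order-preserving involution reversing all signs and hence carries alternating chains to alternating chains of the same length; together these yield $\lambda(\nu\cdot p)=-\lambda(p)$, the required $Z_2$-equivariance.

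Third --- and this is the step I expect to be the main obstacle --- I would verify that $\lambda$ is order-preserving, i.e. that $p\prec q$ forces either $|\lambda(p)|<|\lambda(q)|$ or $\lambda(p)=\lambda(q)$ (recall that $x$ and $-x$ are incomparable in $Q_n$). The delicate point is the case where $\phi(p)$ and $\phi(q)$ have the same sign, where the length need not strictly increase: here one takes a longest alternating chain ending at $p$ and replaces its top element $p$ by $q$, checking that alternation is preserved at the top (the predecessor of $p$, if any, has sign opposite to $p$ and hence to $q$), which gives $a(q)\ge a(p)$ and thus $\lambda(p)=\lambda(q)$. In the opposite-sign case one simply appends $q$ to a longest alternating chain ending at $p$, obtaining $a(q)\ge a(p)+1$ and $|\lambda(p)|<|\lambda(q)|$. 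The existence of such a $\lambda$ contradicts $\Xind(P)=r-1$, since it exhibits an order-preserving $Z_2$-map into $Q_{r-2}$; hence $\max_p a(p)\ge r$.

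Finally, I would extract the desired chain. Choosing $p$ with $a(p)\ge r$ together with a witnessing alternating chain, I keep its bottom $r$ elements to obtain an alternating chain of length exactly $r$ with strictly increasing $|\phi|$. If its first sign is already $-$ we are done; otherwise, applying $\nu$ termwise flips every sign and preserves the order, producing a chain of the same length starting with sign $-$, which is the desired $p_1\prec\cdots\prec p_r$ satisfying $0<-\phi(p_1)<+\phi(p_2)<\cdots<(-1)^r\phi(p_r)$.
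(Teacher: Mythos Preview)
The paper does not actually prove this lemma; it is quoted from \cite{alishahi2017strengthening} without proof. So there is nothing in the present paper to compare your argument against. That said, your proof is correct and is precisely the standard argument used for this type of statement: define the ``alternating-chain depth'' $a(p)$, and if no long alternating chain exists, use $p\mapsto\operatorname{sign}(\phi(p))\cdot a(p)$ as an order-preserving $Z_2$-map into $Q_{r-2}$, contradicting $\Xind(P)=r-1$. Your verification of the order-preserving property (the two cases according to whether $\phi(p)$ and $\phi(q)$ have the same or opposite signs) is the key step and is handled correctly, as is the final sign adjustment via $\nu$.

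One small remark: you invoke finiteness of $P$ to justify that $a(p)$ is well defined. This is harmless here since the only application is to $\Hom(K_2,G)$ for a finite graph, but in fact finiteness is not needed: under the contradiction hypothesis $a(p)\le r-1$ the supremum defining $a(p)$ is automatically attained, and if the hypothesis fails you directly get an alternating chain of length $\ge r$.
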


\subsection{Minrank and independent representations}\label{subsec:rep-mat}

In the proof of Corollary~\ref{cor:minrk}, we use the following fact, which seems to be common knowledge in the coding theory community (and which is used implicitly in the paper by Shanmugam, Dimakis, and Langberg~\cite{shanmugam2013local}). It has been communicated to us together with a proof by Ishay Haviv.

\begin{lemma}\label{lem:ind-minrk}
The minimum $s$ for which there exists an independent representation of a graph $G$ over $\F^s$ is the minrank of the complement of $G$ over $\F$.
\end{lemma}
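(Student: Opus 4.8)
\textbf{Proof plan for Lemma~\ref{lem:ind-minrk}.}

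The plan is to prove both inequalities by translating between a matrix $A$ that represents $\overline G$ and an independent representation of $G$, keeping careful track of what the defining conditions become under the change of viewpoint. Fix $G=(V,E)$, write $\overline G$ for its complement, and recall that $A$ represents $\overline G$ over $\F$ iff $A_{u,u}\neq 0$ for every vertex $u$ and $A_{u,v}=0$ whenever $u\neq v$ and $uv$ is \emph{not} an edge of $\overline G$, i.e.\ whenever $uv\in E$. So the off-diagonal support of $A$ is confined to the edges of $G$ itself. The key observation is that for such a matrix, the rows indexed by the neighborhood $N(v)$ of a vertex $v$ all have a zero in column $v$ (since each such neighbor $u$ satisfies $uv\in E$, hence $vu\notin E(\overline G)$, hence $A_{u,v}=0$), whereas the row indexed by $v$ has a nonzero entry there. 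This single fact drives both directions.

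First I would show $\minrk_{\F}(\overline G)\ge s_{\min}$, where $s_{\min}$ is the minimum dimension of an independent representation of $G$. Take a matrix $A$ representing $\overline G$ with $\rk_{\F}(A)=s=\minrk_{\F}(\overline G)$. Factor $A=BC$ where $B$ is $V\times s$ and $C$ is $s\times V$ (so the rows of $B$ span the row space and the columns of $C$ span the column space, both of dimension $s$); concretely one can take the columns of $C$ to be coordinate vectors of the columns of $A$ in a chosen basis of the column space, and $B$ to be the matrix of that basis. Let $\xx_v\in\F^s$ be the $v$-th column of $C$. Then $A_{u,v}=\langle b_u,\xx_v\rangle$ where $b_u$ is the $u$-th row of $B$. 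For a fixed vertex $v$: every neighbor $u\in N(v)$ has $A_{u,v}=0$, so $\xx_v$ lies in the orthogonal complement (with respect to the chosen pairing $\langle b_u,\cdot\rangle$) of $\{b_u:u\in N(v)\}$, while $A_{v,v}\neq 0$ forces $\xx_v\notin$ that same complement relative to $b_v$. I need to deduce $\xx_v\notin\span\{\xx_u:u\in N(v)\}$; this is where one must be slightly careful, and the cleanest route is to argue directly that if $\xx_v=\sum_{u\in N(v)}\lambda_u\xx_u$ then $A_{v,v}=\langle b_v,\xx_v\rangle=\sum_{u\in N(v)}\lambda_u\langle b_v,\xx_u\rangle=\sum_{u\in N(v)}\lambda_u A_{v,u}$, and each $A_{v,u}$ with $u\in N(v)$ — wait, $vu\in E$ so $vu\notin E(\overline G)$ and $A_{v,u}=0$ as well — hence $A_{v,v}=0$, a contradiction. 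So the columns $\xx_v$ form an independent representation of $G$ in $\F^s$, giving $s_{\min}\le s$.

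Conversely, for $s_{\min}\ge\minrk_{\F}(\overline G)$ I would take an independent representation $(\xx_v)_{v\in V}$ of $G$ in $\F^s$ with $s=s_{\min}$ and build a representing matrix of $\overline G$ of rank at most $s$. For each $v$, since $\xx_v\notin\span\{\xx_u:u\in N(v)\}$, there is a linear functional $f_v\in(\F^s)^*$ with $f_v(\xx_v)\neq 0$ and $f_v(\xx_u)=0$ for all $u\in N(v)$. Define $A_{u,v}=f_u(\xx_v)$. Then $A_{u,u}=f_u(\xx_u)\neq 0$; and if $uv\notin E(\overline G)$, i.e.\ $uv\in E$, so $u\in N(v)$, wait I must be careful about which index plays which role: $A_{u,v}=f_u(\xx_v)$, and $uv\in E$ means $v\in N(u)$, so $f_u(\xx_v)=0$. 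Hence $A$ represents $\overline G$. Moreover $A$ factors as the product of the $V\times s$ matrix whose rows are the coordinate vectors of the $f_u$'s and the $s\times V$ matrix whose columns are the $\xx_v$'s, so $\rk_{\F}(A)\le s$; therefore $\minrk_{\F}(\overline G)\le s=s_{\min}$. Combining the two inequalities gives equality.

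The only real subtlety — the ``main obstacle'' — is bookkeeping the asymmetry between rows and columns (equivalently, between $u$ and $v$) in the non-symmetric matrix $A$ and making sure the support condition is being read off the correct set of entries; once one commits to the convention ``$\xx_v$ = column $v$, functional $f_v$ tested on column $v$, and $A_{u,v}=f_u(\xx_v)$'' everything is forced. There is also a harmless edge case: if $G$ has an isolated vertex $v$, the condition $\xx_v\notin\span(\emptyset)=\{\zero\}$ just says $\xx_v\neq\zero$, matching $A_{v,v}\neq0$, so nothing special is needed. The argument works over an arbitrary field since it uses only linear algebra — rank factorization and the existence of separating functionals — and never any property of a bilinear form.
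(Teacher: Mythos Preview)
Your proof is correct and follows the same strategy as the paper: a rank factorization of a representing matrix yields an independent representation, and separating linear functionals turn an independent representation back into a representing matrix of rank at most $s$. The only cosmetic difference is that the paper takes the \emph{rows} of a full-rank column submatrix (and derives the contradiction via an auxiliary vector $\zz$ with $B\zz$ equal to column $v$ of $A$), whereas you take the \emph{columns} of the factor $C$ in $A=BC$ and read off $A_{v,u}=0$ directly---your route is marginally more streamlined, but the underlying idea is identical.
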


\begin{proof}
Let $r=\minrk_{\F}(\overline G)$ and $s$ the minimum integer such that there exists an independent representation of $G$ over $\F^s$.

Consider a matrix $A$ of rank $r$ representing $\overline G$.
Without loss of generality, assume that the first $r$ columns of $A$ are linearly independent. 
Let $B$ be the submatrix of $A$ whose columns are the first $r$ columns of $A$. Assign to each vertex $v$ the $v$ row of $B$, say $\xx_v^T$.  Suppose for a contradiction that there is a vertex $v$ such that $\xx_v=\sum_{u\in N(v)}\lambda_u\xx_u$ for some $\lambda_u$'s in $\F$. Let $\yy$ be the $v$ column of $A$. 
Since $\yy$ can be written as a linear combination of the columns of $B$, there is a vector $\zz$ such that $B\zz=\yy$.  
Note that $\yy$ has $0$ on its entries in $N(v)$ and nonzero on its $v$ entry. Hence, 
$$0\neq \langle\xx_v,\zz\rangle=\sum_{u\in N(v)}\lambda_u\langle\xx_u,\zz\rangle=0,$$ 
which is a contradiction. Therefore, $r\geq s$.

Conversely, consider an independent representation $(\xx_v)_{v\in V}$ of $G=(V,E)$ over $\F^s$. Let $B$ be the $V\times [s]$ matrix whose row $v$ is $\xx_v^T$. Since $\xx_v$ is not in the span of the $\xx_u$'s with $u\in N(v)$, there exists for each $v$ a vector $\yy_v$ in $\F^s$ such that $B\yy_v$ is $0$ on its entries in $N(v)$ and nonzero on its $v$ entry: look at the submatrix $C$ corresponding to the rows in $N(v)\cup\{v\}$; the rank of $C$ is larger than the rank of the submatrix $D$ without the $v$ row and we can choose $\yy_v$ in $\Ker D\setminus\Ker C$. The $V\times V$ matrix whose $v$ column is $B\yy_v$ represents $\overline G$ over $\F$ and its rank is at most $s$. Therefore, $s\geq r$.
\end{proof}

\section{Proof of the main result}

\begin{proof}[Proof of Theorem~\ref{thm:matr}]
Consider an independent representation $(x_v)_{v\in V}$ of $G=(V,E)$ over a matroid $M$. For $U\subseteq V$, denote by $\S(U)$ the span generated by the elements $x_v$ with $v\in U$.
We consider the set of all possible such $\S(U)$'s. We put on this set an arbitrary total order $\preceq$ that refines the order induced by the ranks, i.e., which is such that $\S(U)\preceq\S(U')$ if $\rk\S(U)\leq\rk \S(U')$.

We introduce now a map $\phi$ from $\Hom(K_2,G)$ to $\Z\setminus\{0\}$. Let $(X,Y)\in\Hom(K_2,G)$. Since $X$ and $Y$ are both nonempty, we have $\S(X)\neq\S(Y)$: any $x_v$ with $v\in X$ is not in $\S(Y)$. We can thus
define unambiguously $\phi$ as 
$$\phi(X,Y)=\left\{\begin{array}{ll}+(\rk\S(X)+\rk\S(Y)) & \mbox{if $\S(X)\preceq\S(Y)$.} \\ -(\rk\S(X)+\rk\S(Y)) & \mbox{if $\S(Y)\preceq\S(X)$.}\end{array}\right.$$ 

We have $\phi(Y,X)=-\phi(X,Y)$. For $(X,Y)\subseteq (X',Y')$, we have $|\phi(X,Y)|\leq|\phi(X',Y')|$, and if $|\phi(X,Y)|=|\phi(X',Y')|$, then $\rk\S(X)=\rk\S(X')$ and $\rk\S(Y)=\rk\S(Y')$, which implies that $\S(X)=\S(X')$ and $\S(Y)=\S(Y')$, and hence $\phi(X,Y)=\phi(X',Y')$. The map $\phi$ satisfies the condition of Lemma~\ref{lem:fan-xind}. There exists thus in $\Hom(K_2,G)$ a sequence $(X_1,Y_1)\subseteq\cdots\subseteq(X_{t-1},Y_{t-1})$ such that the 
$|\phi(X_i,Y_i)|$'s form an increasing sequence of positive numbers, and such that the signs of the $\phi(X_i,Y_i)$'s alternate, starting from a minus sign. Since $G$ has at least one edge, we have $t\geq 2$, i.e., this sequence is not empty. Because of this alternation, we have $\rk\S(X_{t-1})\geq\lfloor t/2\rfloor$ and $\rk\S(Y_{t-1})\geq\lceil t/2\rceil$. We can thus choose a set $X^*$ of $\lfloor t/2\rfloor$ vertices in $X_{t-1}$ such that the elements in $\{x_v\colon v\in X^*\}$ form an independent set, and a set $Y^*$ of $\lceil t/2\rceil$ vertices in $Y_{t-1}$ such that the elements in $\{x_v\colon v\in Y^*\}$ form an independent set. The complete bipartite subgraph of $G$ with bipartition $X^*,Y^*$ is the sought subgraph.
\end{proof}

\section{Deciding the existence of large colorful complete bipartite subgraphs}\label{sec:complexity}

When $M$ is the $r$-uniform matroid over $[r]$, the independent representations over $M$ are exactly the proper colorings with $r$ colors. For a properly colored graph $G$, Theorem~\ref{thm:matr} implies thus the existence of a complete bipartite subgraph with at least $\lfloor t/2\rfloor$ colors on one side and at least $\lceil t/2\rceil$ colors on the other side, where $t=\Xind(\Hom(K_2,G))+2$. This is almost the ``zig-zag'' lemma for the cross-index by Simonyi, Tardif, and Zsb\'an~\cite{simonyi2013colourful}: the statement about the alternation between the two sides is missing but we have indeed two sides of ``balanced'' sizes. Therefore, Theorem~\ref{thm:matr} can be seen as an extension of those results ensuring, via topological arguments, the existence of large complete bipartite subgraphs that are balanced and colorful.

Since these results provide a sufficient condition for the existence of such a colorful subgraph, it is natural to ask whether this existence can be efficiently decided when the condition is not satisfied. Though natural, this complexity question does not seem to have been addressed yet.

\begin{proposition}\label{prop:complex}
Let $G$ be a properly colored graph. Deciding whether there is a complete bipartite subgraph in $G$ with all colors is $\NP$-complete, even if $G$ is bipartite.
\end{proposition}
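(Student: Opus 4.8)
The plan is to first observe membership in $\NP$ and then to prove hardness by a reduction from $3$-SAT whose output graph is automatically bipartite. For membership, a certificate for a yes-instance is a vertex set $W$ together with a partition $W=X\cup Y$ into two nonempty parts; one checks in polynomial time that every pair $xy$ with $x\in X$ and $y\in Y$ is an edge of $G$ and that the colours occurring on $W$ are exactly all the colours of the given proper colouring, which certifies that $G[X,Y]$ is a complete bipartite subgraph using all colours.

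For the hardness, let $\varphi$ be a $3$-SAT formula with variables $x_1,\dots,x_n$ and clauses $C_1,\dots,C_m$; we may assume $n,m\geq 1$. Build a graph $G$ with two vertex sides $L$ and $R$ as follows. For each variable $x_i$ put in $L$ two vertices $t_i$ and $f_i$ (meaning ``$x_i$ true'' and ``$x_i$ false''), both coloured with a colour $\alpha_i$. For each clause $C_j$ and each literal $\ell$ occurring in $C_j$ put in $R$ a vertex $d_{j,\ell}$, coloured with a colour $\beta_j$. The only edges run between $L$ and $R$: the vertex $d_{j,\ell}$ is joined to every vertex of $L$ \emph{except the one contradicting $\ell$}, that is, except $f_i$ when $\ell=x_i$ and except $t_i$ when $\ell=\overline{x_i}$. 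Then $G$ is bipartite with sides $L$ and $R$, its colouring uses the $n+m$ distinct colours $\alpha_1,\dots,\alpha_n,\beta_1,\dots,\beta_m$, and it is proper since two adjacent vertices always form a pair consisting of an $\alpha$-coloured vertex of $L$ and a $\beta$-coloured vertex of $R$. The construction is clearly polynomial.

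The crux is the claim that $G$ has a complete bipartite subgraph using all colours if and only if $\varphi$ is satisfiable. For the forward implication, from a satisfying assignment let $S\subseteq L$ consist of the ``true literal'' vertex of each variable (one per variable) and let $T\subseteq R$ contain, for each clause, one vertex $d_{j,\ell}$ with $\ell$ a literal made true by the assignment: then $(S,T)$ is complete because the only $L$-vertex non-adjacent to such a $d_{j,\ell}$ is the vertex contradicting $\ell$, which is absent from $S$, and it is colourful since $S$ meets every $\alpha_i$ and $T$ meets every $\beta_j$. Conversely, consider a complete bipartite subgraph $G[X,Y]$ using all colours. The vertices coloured $\beta_j$ are exactly the $d_{j,\ell}$ and those coloured $\alpha_i$ are exactly $\{t_i,f_i\}$; since there are no edges inside $L$ or inside $R$ and all colours must appear, one side, call it $S$, lies in $L$ and meets every $\alpha_i$, while the other, $T$, lies in $R$ and meets every $\beta_j$. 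For each clause $C_j$ fix some $d_{j,\ell}\in T$; completeness of $S$ to this vertex forces the $L$-vertex contradicting $\ell$ out of $S$, so the other of $\{t_i,f_i\}$ lies in $S$. Reading the truth value of $x_i$ off which of $t_i,f_i$ lies in $S$ (arbitrarily when both do) produces an assignment under which every clause's fixed literal is true, hence $\varphi$ is satisfied. As $3$-SAT is $\NP$-hard, $\NP$-completeness follows, and the graph $G$ produced is bipartite.

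I expect the delicate point to be the converse implication: one must use the two families of colour classes $\{t_i,f_i\}$ and $\{d_{j,\ell}\}_{\ell}$ to show that a colourful complete bipartite subgraph is forced to split as $S\times T$ with $S\subseteq L$ meeting all $\alpha_i$ and $T\subseteq R$ meeting all $\beta_j$, and then to handle cleanly the case where $S$ contains both $t_i$ and $f_i$ for some $i$ — which is harmless, since it concerns a variable none of whose literals is used for a fixed clause and which may therefore be set arbitrarily without affecting satisfaction.
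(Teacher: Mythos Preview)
Your reduction is correct. The membership argument is routine, and your hardness gadget works: in a bipartite host graph any complete bipartite subgraph must have its two parts contained in the two sides of the host, so a colourful one forces $S\subseteq L$ to hit every $\alpha_i$ and $T\subseteq R$ to hit every $\beta_j$; from the fixed $d_{j,\ell}\in T$ the contradicting literal-vertex is excluded from $S$, which pins down the truth value of the corresponding variable and satisfies $C_j$. Your handling of the case $\{t_i,f_i\}\subseteq S$ is also fine, since then no $d_{j,\ell}$ in $T$ involves $x_i$ and the value of $x_i$ is irrelevant.

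The paper takes a different route. It reduces from \emph{monotone} $3$-SAT and uses only the $m$ clause-colours: the two sides of the bipartite host are the positive clauses and the negative clauses (a vertex is a pair (clause, variable occurring in it)), with an edge between $(i,j)$ and $(i',j')$ whenever $j\neq j'$, and the colour of a vertex is its clause index. A colourful complete bipartite subgraph then directly encodes a consistent truth assignment (true for variables appearing on the positive side, false on the negative side), with consistency guaranteed by the edge rule $j\neq j'$. Compared with your construction, the paper's gadget is leaner---no variable colours and a sparser graph---at the price of invoking the $\NP$-hardness of monotone $3$-SAT; your version works straight from $3$-SAT and makes the variable/clause bookkeeping explicit via the extra $\alpha_i$ colours. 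Both approaches are standard-style SAT reductions and yield the same conclusion.
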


\begin{proof}
We reduce monotone 3SAT to our problem. Montone 3SAT is a special case of 3SAT where in each clause, the variables are either all negated or all unnegated. This is an $\NP$-complete problem; see \cite{garey2002computers}. We consider an instance of monotone 3SAT with clauses $C_1,\ldots,C_m$ and variables $z_1,\ldots,z_n$. We build the following bipartite graph $H=(X,Y;E)$:
\begin{itemize}
\item In $X$, we put all pairs $(i,j)$ such that $z_j$ is a variable of $C_i$.
\item In $Y$, we put all pairs $(i',j')$ such that $\bar z_{j'}$ is a variable of $C_{i'}$.
\item We connect a vertex $(i,j)$ in $X$ to a vertex $(i',j')$ in $Y$ if $j$ is distinct from $j'$.
\item We color each vertex $(i,j)$ by $i$. Since we are considering an instance of monotone 3SAT, this is a proper coloring with $m$ colors.
\end{itemize}
We claim that there is a complete bipartite subgraph with $m$ colors if and only if the instance is satisfiable. This is what we show now.

Suppose that the instance is satisfiable and consider a solution. Select in $X$ all $(i,j)$ such that $z_j$ is true. Select in $Y$ all $(i',j')$ such that $z_{j'}$ is false. Clearly this forms a complete bipartite subgraph where the $m$ colors are used (since each clause is satisfied).

Conversely, suppose that there is a complete bipartite subgraph where the $m$ colors are used. Denote by $K$ the vertex set of this complete bipartite subgraph. Assign true to each $z_j$ such that $(i,j)$ is in $X\cap K$ for some $i$. Assign false to each $z_{j'}$ such that $(i',j')$ is in $Y\cap K$ for some $i'$. There cannot be any contradiction in this way of assigning values to the variables since otherwise the two vertices 
$(i,j)$ and $(i',j)$ must appear in $K$ in distinct sides and be thus connected which is impossible. 
Since the $m$ colors are used, every clause is satisfied. Assign any values to the remaining variables to get a solution to the instance.
\end{proof}

Proposition~\ref{prop:complex} has the following corollary about deciding the existence of ``balanced'' colorful bipartite subgraphs, as in ``zig-zag''-type results.

\begin{corollary}\label{CoNP:balanced}
Let $G$ be a properly colored graph and $t$ an integer. Deciding whether there is a complete bipartite subgraph in $G$ with at least $\lfloor t/2\rfloor$ colors on one side and at least $\lceil t/2\rceil$ colors on the other side is $\NP$-complete, even if $G$ is bipartite.
\end{corollary}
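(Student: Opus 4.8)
The plan is to reduce from monotone 3SAT, reusing and augmenting the bipartite properly colored graph built in the proof of Proposition~\ref{prop:complex}. Membership in $\NP$ is clear (a complete bipartite subgraph together with its two color counts is a certificate), so the point is hardness. The guiding idea: a ``balanced-with-parameter-$t$'' subgraph should be forced, for a suitable $t$, to display \emph{all} colors of an enlarged graph, which then encodes satisfiability just as in Proposition~\ref{prop:complex}.

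Fix a monotone 3SAT instance with clauses $C_1,\dots,C_m$ and variables $z_1,\dots,z_n$, and let $H=(X,Y;E)$ be the bipartite graph from the proof of Proposition~\ref{prop:complex}, properly colored by $\{1,\dots,m\}$. Build $H'$ from $H$ by adding $m$ fresh colors $m+1,\dots,2m$ together with, for each $k$, a pair of wildcard vertices $a_k$ (on the $X$-side) and $b_k$ (on the $Y$-side), both colored $m+k$; join $a_k$ to every vertex of $Y$, join $b_k$ to every vertex of $X$, and join $a_k$ to $b_\ell$ exactly when $k\neq\ell$ (no edges among the $a$'s or among the $b$'s, and $a_k,b_k$ non-adjacent as they share a color). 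Then $H'$ is bipartite (parts $X\cup\{a_1,\dots,a_m\}$ and $Y\cup\{b_1,\dots,b_m\}$) and properly colored with $2m$ colors; the reduction outputs $(H',t)$ with $t=2m$, so that $\lfloor t/2\rfloor=\lceil t/2\rceil=m$.

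The heart is analyzing a complete bipartite subgraph $K_{A,B}$ of $H'$ with $A\subseteq X\cup\{a_1,\dots,a_m\}$ and $B\subseteq Y\cup\{b_1,\dots,b_m\}$. Since all $X$--$Y$ edges of $H'$ are edges of $H$, the pair $(A\cap X,B\cap Y)$ is a (possibly one-sided) complete bipartite subgraph of $H$. Because $a_k$ and $b_k$ are non-adjacent while $K_{A,B}$ is complete bipartite, the wildcard index sets $S_A=\{k\colon a_k\in A\}$ and $S_B=\{\ell\colon b_\ell\in B\}$ are disjoint, so each wildcard color shows up on at most one side; and since $H'$ is properly colored and $K_{A,B}$ is complete bipartite, the color sets of $A$ and of $B$ are disjoint. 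Hence the number of colors on $A$, resp.\ $B$, is $|\mathrm{col}(A\cap X)|+|S_A|$, resp.\ $|\mathrm{col}(B\cap Y)|+|S_B|$; these two numbers sum to at most $2m$ by disjointness, so ``$\geq m$ on each side'' forces each to equal $m$, and then a short count forces $|\mathrm{col}(A\cap X)|+|\mathrm{col}(B\cap Y)|=m$, i.e.\ $A\cap X$ and $B\cap Y$ together carry all $m$ colors of $H$. At this point the argument of Proposition~\ref{prop:complex} applies verbatim to $(A\cap X,B\cap Y)$ and produces a satisfying assignment, regardless of whether one of these two sides is empty. For the converse, a satisfying assignment yields (as in Proposition~\ref{prop:complex}) a complete bipartite subgraph of $H$ carrying all $m$ colors, say $r$ of them on the $X$-side and $m-r$ on the $Y$-side; padding it with $m-r$ of the $a_k$'s on one side and the remaining $r$ vertices $b_\ell$ on the other keeps it complete bipartite (disjoint indices) and brings each side to exactly $m$ colors.

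The step I expect to require the most care is the wildcard analysis just above: one must be sure the $m$ wildcard colors act as reusable tokens that can be assigned to either side but never double-counted, which is exactly what the ``$a_k\sim b_\ell$ iff $k\neq\ell$'' gadget guarantees and what makes ``$2m$ colors in all'' collapse to ``all $m$ colors of $H$ are present''. Bipartiteness, properness of the coloring, polynomiality of the reduction, and the harmless role of degenerate $K_{A,B}$ with an empty side are then routine, and Corollary~\ref{CoNP:balanced}'s freedom to attach the two counts to either side is a non-issue since both counts equal $m$.
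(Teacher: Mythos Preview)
Your proof is correct, but it takes a different route from the paper's. The paper reduces from Proposition~\ref{prop:complex} rather than directly from monotone 3SAT: given a graph $G$ properly colored with $r$ colors, it forms $G_s$ for each $s\in\{0,\ldots,r-1\}$ by adding $s$ new pairwise nonadjacent vertices, each joined to all of $G$ and each receiving a fresh color, and observes that $G$ has a complete bipartite subgraph carrying all $r$ colors if and only if for some $s$ the graph $G_s$ has a balanced colorful bipartite subgraph with parameter $t=s+r$. This is conceptually simpler (the gadget is just a one-sided pad of universal dummies) and cleanly modular, but as written it is a Turing/truth-table reduction rather than a single many-one reduction.

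Your construction instead produces one instance $(H',2m)$ via a two-sided wildcard gadget: for each new color $m+k$ you place a copy $a_k$ on the $X$-side and a copy $b_k$ on the $Y$-side, with $a_k\not\sim b_k$, so that each wildcard color can be credited to at most one side of any complete bipartite subgraph. The disjointness of $S_A$ and $S_B$ then forces, when both sides reach $m$ colors, that all $m$ original clause-colors appear in $(A\cap X,B\cap Y)$, and the argument of Proposition~\ref{prop:complex} applies. The payoff is a genuine Karp reduction and a fixed $t$, at the cost of a slightly more delicate gadget analysis. Both approaches preserve bipartiteness; yours is a legitimate alternative proof.
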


\begin{proof}
We reduce the problem of Proposition~\ref{prop:complex} to this one. Let $r$ be the number of colors used in the proper coloring of $G$. For an integer $s$, we define $G_s$ to be the graph $G$ with $s$ extra dummy vertices connected to all vertices of $G$ and forming a stable set in $G_s$. Color each of these new vertices with a new color. It is clear that $G$ has a complete bipartite graph having all $r$ colors if and only if there is an $s\in \{0,\ldots,r-1\}$ for which $G_s$ has a complete bipartite subgraph with $\lfloor t/2\rfloor$ colors on one side and at $\lceil t/2\rceil$ colors on the other side, where $t=s+r$.
\end{proof}

\section{Further comments}

\subsection{Local chromatic number}\label{subsec:local}

Consider independent representations of a graph over the uniform matroid $U_m^r$ of rank $r$ with $m$ elements. They are exactly proper colorings with $m$ colors and at most $r-1$ colors in the neighborhood of every vertex. So, by considering arbitrary values for $m$, the bound on $\rk M$ provided by Theorem~\ref{thm:matr} covers the lower bound on the local chromatic number found by Simonyi and Tardos~\cite[Theorem 1]{simonyi2006local}. This also implies that the bound of Theorem~\ref{thm:matr} is tight in some cases, e.g., for some Schrijver  graphs; see~\cite[Theorem 3]{simonyi2006local}.

\subsection{Better bounds for matroids?}
A way to get the lower bound of Corollary~\ref{cor:ortho} for independent representations of a graph $G$ over a matroid $M$ is to consider independent representations over $M$ satisfying the following extra condition: 
\begin{equation}\label{cond}\tag{$\star$}
\text{$\rk \S(X)+\rk \S(Y)\leq \rk M$  whenever $G[X,Y]$ is a complete bipartite graph,}
\end{equation} where $\S(U)$ is the span generated by the elements $x_u$ with $u\in U$. 
The existence of the complete bipartite subgraph $K_{\lfloor t/2\rfloor,\lceil t/2\rceil}$ ensured by Theorem~\ref{thm:matr} implies then the stronger inequality $\rk M\geq\Xind(\Hom(K_2,G))+2$. However, this condition does not sound natural. Furthermore, while whether an assignment of elements from $M$ is an independent representation can be checked in polynomial time in most complexity models (e.g., the independence oracle), checking that such an assignment satisfies the extra condition~\eqref{cond} becomes $\coNP$-complete even for simple cases, e.g., when $M$ is the uniform matroid $U_r^{r-1}$ of rank $r-1$ with $r$ elements (like in Section~\ref{subsec:local}).

\begin{proposition}
Consider an independent representation of a graph over $U_r^{r-1}$. Deciding whether \eqref{cond} is satisfied is $\coNP$-complete.
\end{proposition}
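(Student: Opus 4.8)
The plan is to establish $\coNP$-completeness by reducing from the same monotone 3SAT-based construction used in Proposition~\ref{prop:complex}, but interpreting a properly colored graph as an independent representation over $U_r^{r-1}$. First I would recall the dictionary of Section~\ref{subsec:local}: an independent representation of a graph $H$ over $U_r^{r-1}$ is exactly a proper coloring of $H$ with $r$ colors in which the neighborhood of every vertex sees at most $r-2$ colors; equivalently, here every vertex is assigned a standard basis vector $e_{c(v)}$ of the rank-$(r-1)$ uniform matroid, and $\rk\S(U)$ is just $\min(|c(U)|,r-1)$, the number of distinct colors on $U$ capped at $r-1$. So condition~\eqref{cond} says: for every complete bipartite subgraph $G[X,Y]$ of $H$, the number of colors on $X$ plus the number of colors on $Y$ is at most $r$ (the cap $r-1$ is never binding when the sum is $\le r$, so I can drop it). Deciding~\eqref{cond} is thus in $\coNP$: a certificate of a \emph{violation} is a complete bipartite subgraph $G[X,Y]$ together with a choice of $\lceil r/2\rceil$ and $\lfloor r/2\rfloor+1$ vertices witnessing, respectively, that side has that many distinct colors — linear-size and poly-time checkable.

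For $\coNP$-hardness I would reduce the complement of monotone 3SAT. Given a monotone 3SAT instance with clauses $C_1,\dots,C_m$ and variables $z_1,\dots,z_n$, take the bipartite graph $H=(X,Y;E)$ exactly as in the proof of Proposition~\ref{prop:complex}, with the coloring $c(i,j)=i$, which is proper and uses $r=m$ colors; as observed there, the neighborhood of any vertex misses at least one color (its own), so this is a valid independent representation over $U_m^{m-1}$ — provided $m\ge 2$, which we may assume. By Proposition~\ref{prop:complex}'s analysis, $H$ contains a complete bipartite subgraph using all $m$ colors if and only if the instance is satisfiable. The key point to verify is then: \eqref{cond} fails for this representation if and only if some complete bipartite subgraph uses all $m$ colors. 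The forward direction needs an argument: a violation gives sets $X'\subseteq X$, $Y'\subseteq Y$ spanning a complete bipartite subgraph with $|c(X')|+|c(Y')|\ge m+1$; since the total number of colors is $m$, the color sets $c(X')$ and $c(Y')$ overlap in at least one color, say $i$; but that means vertices $(i,j)\in X'$ and $(i,j')\in Y'$ both lie in the subgraph, and adjacency $(i,j)\sim(i,j')$ forces $j\ne j'$ — that is consistent with the graph, so I must look more carefully. The clean fix is to note $|c(X')\cup c(Y')|\le m$ while $|c(X')|+|c(Y')|\ge m+1$, hence $|c(X')\cup c(Y')|\le m$ and at the same time, restricting to the subsets of $X'$ and $Y'$ actually realizing these color counts, one extracts a complete bipartite subgraph whose two color sets \emph{together} cover all $m$ colors: indeed any color in $c(X')\cap c(Y')$ can be dropped from one side while keeping the union, and iterating yields disjoint color sets on the two sides summing to at least $|c(X')\cup c(Y')|$; combined with $\ge m+1 - |c(X')\cup c(Y')|$ extra, one forces $|c(X')\cup c(Y')|=m$. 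Thus $H$ has an all-colors complete bipartite subgraph, which is satisfiability.

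The converse direction is immediate: if the instance is satisfiable, Proposition~\ref{prop:complex}'s construction produces a complete bipartite subgraph $G[X^*,Y^*]$ using all $m$ colors, and since each of $X^*,Y^*$ is nonempty it has at least one color, so $|c(X^*)|+|c(Y^*)|\ge m+1$ (the two sides partition a palette of $m$ colors and each is nonempty), violating~\eqref{cond}. Hence \eqref{cond} holds for the constructed representation iff the monotone 3SAT instance is unsatisfiable, giving a polynomial reduction from the complement of an $\NP$-complete problem, so deciding~\eqref{cond} is $\coNP$-complete. I expect the main obstacle to be exactly the color-overlap bookkeeping in the hardness direction: one must be careful that a violation of~\eqref{cond} — where the two color sets may share colors — really does yield a complete bipartite subgraph on which all $m$ colors appear, and the cleanest route is the pruning argument above, which uses only that the overall palette has size $m$ and that adjacency in $H$ depends solely on the second coordinate, so colors (first coordinates) can be removed from either side without destroying completeness of the bipartite subgraph.
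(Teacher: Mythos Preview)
There are several genuine gaps, all stemming from two basic confusions about $U_r^{r-1}$.

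First, $\rk M = r-1$, not $r$. Condition~\eqref{cond} therefore reads $\rk\S(X)+\rk\S(Y)\le r-1$, and a violation means $\rk\S(X)+\rk\S(Y)\ge r$. Your translation ``the number of colors on $X$ plus the number of colors on $Y$ is at most $r$'' is off by one, and this error propagates: in your converse direction you claim $|c(X^*)|+|c(Y^*)|\ge m+1$, but since the coloring is proper and $G[X^*,Y^*]$ is complete bipartite, the two color sets are \emph{disjoint}, so their sizes sum to exactly $m$, never $m+1$.

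Second, and more seriously, your claim that the graph $H$ from Proposition~\ref{prop:complex} with its $m$-coloring is automatically a valid independent representation over $U_m^{m-1}$ is wrong. You argue that each neighborhood misses the vertex's own color, hence sees at most $m-1$ colors. But in $U_m^{m-1}$ the closure of any $(m-1)$-element set is the whole ground set, so $x_v\notin\span(\{x_u:u\in N(v)\})$ requires at most $m-2$ colors in $N(v)$. Nothing in the construction of $H$ guarantees this. The paper fixes this not by modifying $H$, but by observing that the problem of Proposition~\ref{prop:complex} remains $\NP$-complete when restricted to colorings with at most $r-2$ colors in every neighborhood: instances violating this restriction are detected in polynomial time and are automatic \textsc{yes}-instances (the offending vertex together with its neighborhood already gives a $K_{1,r-1}$ using all colors).

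Finally, your entire ``pruning'' argument in the forward direction is unnecessary and confused. In any properly colored graph, the two sides of a complete bipartite subgraph carry disjoint color sets; hence $|c(X')|+|c(Y')|=|c(X'\cup Y')|\le r$, and (once the neighborhood bound $\le r-2$ is in place so that the cap never binds) a violation $\rk\S(X')+\rk\S(Y')\ge r$ is exactly $|c(X'\cup Y')|=r$, i.e.\ all colors appear. The overlap scenario you try to handle simply cannot occur.
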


\begin{proof}
We prove that the problem of Proposition~\ref{prop:complex} is reducible to the problem of deciding whether \eqref{cond} is not satisfied.
To this end, we interpret the representation over the matroid  $U_r^{r-1}$ as a proper coloring with $r$ colors such that every vertex has at most $r-2$ colors in its neighborhood. Note that restricting the problem of Proposition~\ref{prop:complex} to this kind of proper colorings is still $\NP$-complete, 
since in any proper $r$-coloring of a graph, it is polynomially checkable whether there is a vertex whose neighbors received  $r-1$ colors.

Let $G$ be a graph with an independent representation of a graph over $U_r^{r-1}$.  
If there is a complete bipartite subgraph $G[X,Y]$ with $\rk\S(X)+\rk\S(Y)\geq r$, then this subgraph has at least $r$ colors, and it is a complete bipartite subgraph with all the colors. If there is a complete bipartite subgraph $G[X,Y]$ with all the colors, then $\rk\S(X)$ and $\rk\S(Y)$ are respectively the number of colors in $X$ and in $Y$ (there are at most $r-2$ colors in $X$ and at most $r-2$ colors in $Y$), and thus $\rk\S(X)+\rk \S(Y)\geq r$.
\end{proof}

For special independent representations, we get this extra condition~\eqref{cond} for free. Orthogonal representations 
are like this. The usual proper coloring is also such an example: as noted in Section~\ref{sec:complexity}, a proper coloring is an independent representation over a uniform matroid of the form $U_r^r$, and 
clearly if $G[X,Y]$ is a complete bipartite graph, then $\rk \S(X\cup Y)\leq \rk M$ is the total number of distinct elements assigned to $X\cup Y$, which is equal to the total number 
of distinct elements assigned to $X$ plus the total number 
of distinct elements assigned to $Y$; condition~\eqref{cond} is satisfied in such a representation. (Note that we recover thus the inequality $\chi(G)\geq\Xind(\Hom(K_2,G))+2$.) Just changing by one unit the rank of the uniform matroid modifies the complexity status of checking~\eqref{cond}.

Here is yet another example where we get \eqref{cond} for free. Let $H$ be a bipartite graph and denote by $U$ one of its sides. Take $M$ as the transversal matroid built from $H$ with elements in $U$: its independent sets are the subsets of $U$ covered by matchings of $H$. Consider an assignment of elements of $M$ to the vertices of $G$ such that adjacent vertices of $G$ get vertices in $U$ with disjoint neighborhoods. This forms an independent representation over $M$ satisfying the condition~\eqref{cond}.
Therefore, for such a matroid, the inequality $\rk M\geq\Xind(\Hom(K_2,G))+2$ holds too.

It is worth noting that if we require in the condition~\eqref{cond} that the cardinalities of $X$ and $Y$ differ by at most $1$, then we still have 
the lower bound of Corollary~\ref{cor:ortho} for independent representations of a graph $G$ over a matroid $M$. 
The problem of deciding whether this new version of condition~\eqref{cond} is satisfied is also $\coNP$-complete; this time because of Corollary~\ref{CoNP:balanced}. 

\subsection{All vectors being pairwise orthogonal}
The intersection of two orthogonal subspaces in a Hilbert space is $\{0\}$. Therefore, if the $s$-dimensional orthogonal representation of $G$ comes from a finite dimensional Hilbert space (like $\R^s$ or $\C^s$ with $\langle\yy,\zz\rangle=\sum_iy_i\overline{z_i}$), then all the vectors assigned to the vertices of the complete bipartite subgraph $K_{\lfloor t/2\rfloor,\lceil t/2\rceil}$ in the statement of Theorem~\ref{thm:matr} are linearly independent. 

A way to improve Theorem~\ref{thm:matr} for the special case of orthogonal representations over $\R$ would be to show that we can find a large bipartite complete subgraph whose assigned vectors are pairwise orthogonal. It is unlikely to get a result in that direction, as shown by the following example.

The $5$-cycle $C_5$ is such that $\od_{\R}(C_5)=\Xind(\Hom(K_2,C_5))+2=3$. Assigning the following $5$ vectors consecutively to the $5$ vertices of $C_5$ leads to a $3$-dimensional orthogonal representation over $\R$ with no $K_{1,2}$ having its $3$ vectors pairwise orthogonal:
$$\left(\begin{array}{c}1 \\ 1 \\ 1\end{array}\right),\quad \left(\begin{array}{c}-1 \\ -1 \\ 2\end{array}\right),\quad \left(\begin{array}{c}3 \\ 1 \\ 2\end{array}\right),\quad \left(\begin{array}{c}-1 \\ 5 \\ -1\end{array}\right),\quad \left(\begin{array}{c}1 \\ 0 \\-1\end{array}\right).$$

\subsection{Gap between the chromatic number and the orthogonal dimension over $\Z_2$}
In the special case of the binary field $\Z_2$, we always have $\od_{\Z_2}(G)=\chi(G)$ when $\chi(G)\leq 6$. Indeed, define $H_t$ to be the graph with vertices the vectors in $\Z_2^t$ with an odd number of nonzero entries, and for which two vertices are adjacent if the corresponding vectors are orthogonal; a $t$-dimensional orthogonal representation of $G$ over $\Z_2$ is a homomorphism from $G$ to $H_t$ and it is easy to check that $\chi(H_t)=t$ when $t\leq 5$. Though, $\chi(H_6)=8$ -- computed with the help of the computer algebra system SageMath. 



Actually, as we explain now, the gap between $\od_{\Z_2}(H_t)$ and  $\chi(H_t)$ can be exponentially large. This a fact has been communicated to us by Ishay Haviv. 

Note first that $\od_{\Z_2}(H_t)=t$ for all $t$: we have clearly  $\od_{\Z_2}(H_t)\leq t$ by definition of $H_t$, and $H_t$ has a clique of size $t$ which implies $\od_{\Z_2}(H_t)\geq t$.
On the other hand, $H_t$ is a graph with $2^{t-1}$ vertices and is an induced subgraph of a $(2^{t-1}-1)$-regular graph $G_t$ with $2^t-1$ vertices and a smallest eigenvalue equal to $-\sqrt{2^{t-2}}$; see~\cite{Alon1997} where this latter graph is denoted by $G(t-1,2)$. Using the Delsarte-Hoffman-type bound for graphs with loops~\cite{LI2014146}, 
we get $\alpha(G_t)\leq O(2^{t/2})$, which implies $\chi(H_t)\geq\frac {|V(H_t)|} {\alpha(H_t)}\geq\frac {|V(H_t)|}{\alpha(G_t)} =\Omega(2^{t/2})$.

The gap between the orthogonal dimension over $\R$ and the chromatic number can be exponentially large as well~\cite[Proposition 2.2]{haviv2019approximating}, and this holds also for any field of characteristic different from $2$ since the proof in the latter paper uses only that property of $\R$. We end by noting that for orthogonal representations over finite fields, a construction similar to that of $H_t$ and similar arguments lead to an exponential gap too.

\section*{Acknowledgments}
The authors thank G\'abor Simonyi for pointing out the paper by Ishay Haviv~\cite{haviv2019topological} that has been at the origin of the present work. They are also grateful to Ishay Haviv for many helpful comments and especially for sharing with them the characterization of the minrank given by Lemma~\ref{lem:ind-minrk}: it contributed to improve the paper.

This work was realized when the first author was visiting \'Ecole des Ponts, from which he acknowledges support. 

\bibliographystyle{plain}
\bibliography{OrthogonalRep}

\end{document}